 \DeclareMathOperator\Aut{{\rm Aut}}
\newcommand\liegr{\sf}
\newcommand{\SU}[1]{\mbox{${\liegr SU}(#1)$}}
\newcommand{\U}[1]{\mbox{${\liegr U}(#1)$}}
\newcommand{\SP}[1]{\mbox{${\liegr Sp}(#1)$}}
\newcommand{\SO}[1]{\mbox{${\liegr SO}(#1)$}}
\newcommand{\SOxO}[2]{\mbox{${\liegr S(O}(#1)\times{\liegr O}(#2))$}}
\newcommand{\OG}[1]{\mbox{${\liegr O}(#1)$}}
\newcommand{\Spin}[1]{\mbox{${\liegr Spin}(#1)$}}
\newcommand{\G}{\mbox{$\liegr{G}_2$}}
\newcommand{\F}{\mbox{$\liegr{F}_4$}}
\newcommand{\Lg}{\mbox{$\mathfrak g$}}
\newcommand{\Lh}{\mbox{$\mathfrak h$}}
\newcommand\fieldsetc{\mathbb}
\renewcommand{\H}{\fieldsetc{H}}
\newcommand{\Z}{\fieldsetc{Z}}
\newcommand{\R}{\fieldsetc{R}}
\newcommand{\C}{\fieldsetc{C}}
\newcommand{\Q}{\fieldsetc{H}}
\newcommand{\Ca}{\fieldsetc{O}}
\newcommand{\K}{\fieldsetc{K}}
\newcommand{\Cdiag}[3]
{\begin{xy}
\POS (0,0)*\cir<3pt>{} ="a", \POS (0,3)*{\scriptstyle #1},
\POS (10,0)*\cir<3pt>{} ="b", \POS (10,3)*{\scriptstyle #2},
\POS (20,0)*\cir<3pt>{} ="c", \POS (20,3)*{\scriptstyle #3},
\POS "a" \ar@{-}^<<<<{} "b",
\POS "b" \ar@{=}^<<<<{} "c",
\end{xy}}
\newcommand{\ACdiag}[3]
{\begin{xy}
\POS (0,0)*\cir<3pt>{} ="a", \POS (0,3)*{\scriptstyle #1},
\POS (10,0)*\cir<3pt>{} ="b", \POS (10,3)*{\scriptstyle #2},
\POS (20,0)*\cir<3pt>{} ="c", \POS (20,3)*{\scriptstyle #3},
\POS "b" \ar@{=}^<<<<{} "c",
\end{xy}}
\newcommand{\AAAdiag}[3]
{\begin{xy}
\POS (0,0)*\cir<3pt>{} ="a", \POS (0,3)*{\scriptstyle #1},
\POS (10,0)*\cir<3pt>{} ="b", \POS (10,3)*{\scriptstyle #2},
\POS (20,0)*\cir<3pt>{} ="c", \POS (20,3)*{\scriptstyle #3},
\end{xy}}
\newcommand{\Atildediag}[2]
{\begin{xy}
\POS (0,0)*\cir<3pt>{} ="a", \POS (0,3)*{\scriptstyle #1},
\POS (10,0)*\cir<3pt>{} ="b", \POS (10,3)*{\scriptstyle #2},
\POS "a" \ar@{-}^<<<<{\;\;\infty} "b",
\end{xy}}
\newtheorem{thm}{Theorem}[section]
\newtheorem{cor}[thm]{Corollary}
\newtheorem{lem}[thm]{Lemma}
\newtheorem{sch}[thm]{Scholium}
\theoremstyle{remark}
\newtheorem{rmk}[thm]{Remark}
\newtheorem{quest}[thm]{Questions}
\begin{document}
\setcounter{tocdepth}{1}

%
%
%

\pagenumbering{arabic}
\pagestyle{plain}

\title{Some remarks on polar actions}

\author[C.~Gorodski]{Claudio Gorodski}
\author[A.~Kollross]{Andreas Kollross}

\address{Instituto de Matem\'atica e Estat\'\i stica, Universidade de
S\~ao Paulo, Rua do Mat\~ao, 1010, S\~ao Paulo, SP 05508-090, Brazil}

\email{gorodski@ime.usp.br}

\address{Universität Stuttgart, Institut für Geometrie und Topologie,
Pfaffenwaldring~57, 70569~Stuttgart, Germany}

\email{kollross@mathematik.uni-stuttgart.de}

\thanks{The first author was partially supported by the
CNPq grant 303038/2013-6 and the FAPESP project 2011/21362-2.}

\date{\today}

\subjclass[2010]{53C35, 57S15}

\keywords{Cayley plane, polar action, asystatic action, compact rank one
symmetric space}

\begin{abstract}
We classify infinitesimally polar actions on compact Riemannian symmetric spaces of rank one. We also prove that every polar action on one of those spaces has
the same orbits as an asystatic action.

\end{abstract}

\maketitle


\section{Introduction and Results}

A Riemannian orbifold is a metric space which is locally modeled
on quotients of Riemannian manifolds by finite groups of isometries.
It has been shown by Lytchak and Thorbergsson~\cite{L-T} that the orbit
space of a proper and isometric action of a Lie group on a Riemannian
manifold, with the quotient metric space structure, is a Riemannian
orbifold if and only if the action is \emph{infinitesimally polar},
which means that all of its slice representations are polar.
Recall that a proper and isometric action of a Lie group on a
complete Riemannian manifold is called \emph{polar}
if there exists a connected complete isometrically immersed
submanifold, called a \emph{section}, meeting all orbits and always
orthogonally. A section is automatically totally geodesic, and if it is
flat in the induced metric then the action is called
\emph{hyperpolar}~\cite{PT2,BCO}.

Infinitesimally polar actions of connected compact Lie groups
on Euclidean spheres have been classified in~\cite{G-L3}.
It is obvious that infinitesimally polar
actions on real projective spaces are exactly those actions
induced from infinitesimally polar actions on spheres.
In the present paper, we classify infinitesimally polar actions
on the remaining compact rank one symmetric spaces.
Note that general
polar actions are infinitesimally polar (\cite[Thm~4.6]{PT2}
or~\cite[Prop.~3.2.2]{BCO}), and polar actions on
compact rank one symmetric spaces have been classified
by Podest\`a and Thorbergsson in~\cite{P-Th1}
(although they have overlooked one case, cf.~subsection~\ref{g2so3}).
Our result is as follows.

\begin{thm}\label{main}
Assume a compact connected Lie group acts isometrically and
effectively on a compact rank one symmetric space $M$.
\begin{enumerate}
\item[(a)]
If $M$ is a complex projective space
$\C P^m$, then the action is infinitesimally polar
if and only if it is polar or it is orbit equivalent
to the action induced from one of the following
representations~$\rho$ of~$G$:
 \setlength{\extrarowheight}{0.1cm}
\[ \begin{array}{|c|c|c|c|c|}
\hline
G & \rho & \text{Conditions} & m & \text{Orbit space}\\
\hline
\SO2\times\Spin9 & \R^2\otimes_{\mathbb R}\R^{16} & - & 15 & S^2_{+++}(\frac12)\\
\U 2\times\SP n  & \C^2\otimes_{\mathbb C}\C^{2n} & n\geq2& 4n-1 & S^2_{+++}(\frac12)\\
\U n & \C^n\oplus\C^n & n\geq2 & 2n-1 & S^3_+(\frac12)\\
\liegr T^ 2 \times \SU n & \C^n\oplus\C^n & n\geq2 & 2n-1 & S^2_{++}(\frac12)\\
\U1\times \SP n& \C^{2n}\oplus\C^{2n} & n\geq2 & 4n-1 & S^4_{++}(\frac12)\\
\liegr T^2\times \SP n & \C^{2n} \oplus \C^{2n} & n\geq2& 4n-1&
S^3_{+++}(\frac12)\\
\hline
\end{array}
\]
\begin{center}
\hfill\\
\textsc{Table~1}\hfill\\
\hfill\\
\end{center}

\item[(b)]
If $M$ is a quaternionic projective space
$\Q P^m$ with $m>1$,
then the action is infinitesimally polar
if and only if it is polar or it is orbit equivalent to
the action induced from the following
representation~$\rho$ of~$G$:\hfill\\[1em]
 \setlength{\extrarowheight}{0.1cm}
\begin{minipage}{\textwidth}
\[ \begin{array}{|c|c|c|c|c|}
\hline
G & \rho & \text{Conditions} & m & \text{Orbit space}\\
\hline
\SP n\times\SP1  & \R^{4n}\oplus\R^{4n} & n\geq2 & 2n-1 & S^3_{++}(\frac12)\\
\hline
\end{array}
\]
\begin{center}
\hfill\\
\textsc{Table~2}\hfill\\
\hfill\\
\end{center}
\end{minipage}

\item[(c)]
If $M$ is the Cayley projective plane $\Ca P^2$, then
the action is infinitesimally polar if and only if it is polar.
In this case the action is conjugate to one given by the following
subgroups $G$ of $\F$:\hfill\\[1em]
\begin{minipage}{\textwidth}
\setlength{\extrarowheight}{0.35cm}
\[ \begin{array}{|c|c|c|c|}\hline
G & \text{Cohom}  & \text{Type} & \text{Multiplicities}\\[1.1ex]\hline
\Spin9 & 1 & {\liegr\tilde A_1}& \Atildediag{15}7\\[1.1ex]
\renewcommand{\arraystretch}{.6}
\begin{array}{c}\SP3\cdot\SP1\\\SP3\cdot\U1  \\\SP3\end{array}&  1 &{\liegr\tilde A_1} & \Atildediag74\\[1.1ex]
\Spin8 & 2 &{\liegr A_1\times\liegr A_1\times\liegr A_1}&\AAAdiag777\\[1.1ex]
\Spin7\cdot\SO2 & 2 &{\liegr A_1\times\liegr C_2}&\ACdiag761\\[1.1ex]
\SU4\cdot\SU2 & 2 & {\liegr A_1\times\liegr C_2} &\ACdiag752\\[1.1ex]
\SU3\cdot\SU3 & 2 & {\liegr C_3}&\Cdiag223\\[1.1ex]
\SO3 \cdot \G & 2 & {\liegr C_3}&\Cdiag115\\[1.1ex]\hline
\end{array} \]
\begin{center}
\hfill\\
\textsc{Table~3}\hfill\\
\hfill\\\end{center}
\end{minipage}
\end{enumerate}

The notation $S^n_+(r)$, $S^n_{++}(r)$ and $S^n_{+++}(r)$ in Tables~1 and~2 
stands for the quotient of the $n$-sphere of radius~$r$ by a group 
generated by $1$, $2$, resp.~$3$, commuting reflections.
In Table~3 we indicate the type of the Coxeter group acting on
the universal covering of the section and its multiplicities
(these are the dimensions of the unit spheres in the normal space,
at a generic point, to a sub-principal stratum of $M$ ---
that is, a stratum in~$M$
projecting to a codimension one stratum in the orbit space).
\end{thm}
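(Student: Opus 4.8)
The plan is to reduce the cases $M=\C P^m$ and $M=\Q P^m$ to the classification of infinitesimally polar representations of compact Lie groups on Euclidean spheres obtained in~\cite{G-L3}, and to treat the Cayley plane separately and more directly. Throughout I use the dictionary of~\cite{L-T}: an isometric action is infinitesimally polar exactly when all its slice representations are polar, equivalently when its orbit space is a Riemannian orbifold. Since polar actions are infinitesimally polar~\cite{PT2} and polar actions on compact rank one symmetric spaces are classified in~\cite{P-Th1} (up to the case overlooked there, see subsection~\ref{g2so3}), what remains is to locate the actions which are infinitesimally polar but not polar.

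For $M=\C P^m$, a connected compact $G\subseteq\Aut(M)^0=PU(m+1)$ lifts to its full preimage $\tilde G\subseteq\U{m+1}$, which is connected and contains the scalar circle $S^1$; thus the $G$-action on $\C P^m$ is the one induced by a unitary representation of $\tilde G$ on $\C^{m+1}$ containing the scalars. The key point is that for every $v\in S^{2m+1}$ the slice representation of $G$ at $[v]$ is isomorphic to the slice representation of $\tilde G$ at $v$: since $S^1\subseteq\tilde G$, the Hopf direction $\R iv$ lies in $T_v(\tilde G v)$, so the normal space of $\tilde G v$ in $S^{2m+1}$ is carried isometrically onto the normal space of $G[v]$ in $\C P^m$, and because the scalar circle meets $\tilde G_v$ trivially the natural map $\tilde G_v\to G_{[v]}$ is an isomorphism intertwining the two normal actions. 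Hence the $G$-action on $\C P^m$ is infinitesimally polar if and only if the representation of $\tilde G$ on $\C^{m+1}$ is. One then extracts from~\cite{G-L3} the infinitesimally polar representations containing a scalar circle, discards those whose induced action on $\C P^m$ is polar (by comparison with~\cite{P-Th1}), and is left with Table~1. The same scheme applies to $M=\Q P^m=S^{4m+3}/\SP1$ (with $m>1$), except that the Hopf sphere $\SP1$ acting on the right of $\H^{m+1}$ is not contained in $\tilde G\subseteq\SP{m+1}$, so one compares the slice representations of the $G$-action on $\Q P^m$ with those of the representation of $\tilde G\times\SP1$ on $\H^{m+1}$; via this correspondence, \cite{G-L3} leaves exactly the single entry of Table~2.

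The Cayley plane $\Ca P^2=\F/\Spin9$ admits no linear model, and the assertion is that infinitesimal polarity already forces polarity; this is where the real work lies, and I expect it to be the main obstacle, as~\cite{G-L3} cannot be invoked directly. I would distinguish whether $G$ has a fixed point. If it does, then $G\subseteq\Spin9$ and its slice representation there is the restriction to $G$ of the spin representation on $\R^{16}$; realizing $\Ca P^2$ as a disc bundle over its polar $\Ca P^1=S^8$ whose boundary projection is the Hopf fibration $S^7\to S^{15}\to S^8$, the orbit space of the $\Ca P^2$-action is obtained by gluing the orbit space of this linear $G\subseteq\Spin9$ to that of the induced $G$-action on $S^8$. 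Some subgroups of $\Spin9$ (for instance $\Spin7$ acting on two copies of the $\Spin7$-spinor module) are infinitesimally polar but not polar on $\R^{16}$, so the delicate step is to show that this gluing is a Riemannian orbifold \emph{only} when the linear representation is polar; combined with the classification of polar representations possessing a fixed point, this pins $G$ down. If $G$ has no fixed point, one works through the remaining connected subgroups of $\F$ directly, checking in each case that infinitesimal polarity entails polarity. The outcome is Table~3, which adds to the Podest\`a--Thorbergsson list the missing $\SO3\cdot\G$ action.
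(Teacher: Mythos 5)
For parts (a) and (b) your reduction is essentially the paper's: lift through the Hopf fibration (the paper uses $G\times\U1$ on $S^{2m+1}$, resp.\ $G\times\SP1$ on $S^{4m+3}$, rather than the preimage in $\U{m+1}$, but these have the same orbits), identify the slice representations, and quote the sphere classification of~\cite{G-L3}; this is correct in outline, modulo the bookkeeping needed to separate the polar entries (the paper does this by showing a lifted action is polar iff the action downstairs is polar with totally real sections, via O'Neill's formula and~\cite{P-Th2,Ly2}, whereas you propose to compare directly with~\cite{P-Th1} --- either works).

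Part (c) is where the substance of the theorem lies, and there your proposal is a strategy sketch with the hard work left undone. Moreover the one step you single out as delicate is not the actual difficulty. If $G$ fixes a point $q$, then the slice representation of $G$ at $q$ is the restriction of the spin representation to $G\subset\Spin9$, and by the very definition of infinitesimal polarity (equivalently, by the Lytchak--Thorbergsson dictionary you invoke at the outset) this slice representation must be \emph{polar} --- not merely infinitesimally polar --- for the action on $\Ca P^2$ to be infinitesimally polar. So your worry that a non-polar but infinitesimally polar linear model (such as your $\Spin7$ example) might still glue to an orbifold is moot: a single non-polar slice representation already rules the group out, and no disc-bundle or gluing analysis is needed for that direction. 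The genuine work runs the other way and elsewhere: (i) one must enumerate the relevant subgroups of $\F$ systematically (the paper descends through Dynkin's chains of maximal connected subgroups $\Spin9$, $\SP3\cdot\SP1$, $\SU3\cdot\SU3$, $\G\cdot\liegr{A}_1^8$, $\liegr{A}_1^{156}$); (ii) one needs a mechanism to kill proper subgroups wholesale, which in the paper is the lemma of~\cite{K-P} that a polar restriction of a faithful irreducible representation of cohomogeneity at least two forces orbit equivalence, applied to irreducible cohomogeneity-two slice representations at points of totally geodesic singular orbits, together with~\cite{D,EH}; (iii) most of Table~3 --- $\SP3\cdot\SP1$ and its degenerations, $\SU3\cdot\SU3$, and the overlooked $\SO3\cdot\G$ --- consists of groups with \emph{no} fixed point, which your dichotomy consigns to ``work through the remaining subgroups directly'' without indicating how; in particular the $\SO3\cdot\G$ action requires an explicit construction (the paper uses the Jordan-algebra model of $\Ca P^2$) both to verify polarity and to compute the $\liegr C_3$ Coxeter data; and (iv) rank-one subgroups need a separate dimension-count argument. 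As it stands, the proposal establishes (a) and (b) but does not constitute a proof of (c).
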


Following Thurston~\cite{Thu}, we call a Riemannian orbifold
\emph{good} if it is globally isometric to the quotient
of a Riemannian manifold by a discrete group of isometries.

\begin{cor}
An infinitesimally polar action on a compact rank one symmetric
space has a good Riemannian orbifold as a quotient.
\end{cor}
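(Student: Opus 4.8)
The plan is to read the Corollary off the classification in Theorem~\ref{main}, supplemented --- for the spheres $S^n$ and the real projective spaces $\R P^n$ --- by the classification of infinitesimally polar actions on spheres in \cite{G-L3}. The point is that in every case the orbit space, with its quotient metric, is presented either as $\Sigma/W$ for a section $\Sigma$ and its (finite) generalized Weyl group $W$, or as a round sphere modulo a finite group of reflections; both of these are, by definition, good Riemannian orbifolds, so once the classification is available no further work is needed.

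First I would dispose of the polar actions, which account for all of Theorem~\ref{main}(c) and for the polar actions occurring in parts~(a) and~(b), as well as for polar actions on $S^n$ and $\R P^n$. If a compact Lie group $G$ acts polarly on a compact Riemannian manifold $M$ with section $\Sigma$, then the generalized Weyl group $W=N_G(\Sigma)/Z_G(\Sigma)$ is finite, acts isometrically on $\Sigma$, and the inclusion $\Sigma\hookrightarrow M$ induces an isometry $\Sigma/W\to M/G$ (see \cite{PT2,BCO}). Since $\Sigma$ is a complete totally geodesic submanifold of $M$, hence a Riemannian manifold, this exhibits $M/G=\Sigma/W$ as a good Riemannian orbifold. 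In the rows of Table~3 the group $W$ is precisely the Coxeter group recorded there, acting on a cover of the section.

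Next I would treat the non-polar infinitesimally polar actions. For $M=\C P^m$ or $M=\Q P^m$, Theorem~\ref{main}(a),(b) asserts that such an action is orbit equivalent to one of those in Tables~1 and~2, and in each row the last column identifies the metric orbit space with the quotient of a round sphere by a group generated by finitely many commuting reflections; being the quotient of a Riemannian manifold by a finite group of isometries, this is a good orbifold. For $M=\Ca P^2$ there is nothing further to do, since by Theorem~\ref{main}(c) every infinitesimally polar action is polar. For $M=S^n$ one invokes the classification of \cite{G-L3}, from which the orbit spaces are again seen to be of the required form; and since an isometric action on $\R P^n$ has the same orbit space as the action on $S^n$ of its group of lifts --- which is infinitesimally polar exactly when the original one is --- the case $M=\R P^n$ reduces to that of the sphere.

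The only ingredient that is not purely formal is the identification, for each row of Tables~1 and~2, of the \emph{metric} orbit space --- and not merely its topological type --- with the displayed sphere quotient; but this is part of what is established in the course of proving Theorem~\ref{main}, and may simply be quoted here. Beyond assembling the classification correctly I expect no genuine obstacle, the argument being otherwise entirely formal.
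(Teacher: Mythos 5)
Your proposal is correct and follows essentially the same route as the paper, which states the Corollary without separate proof precisely because it is meant to be read off the classification: polar actions have quotient $\Sigma/W$ for a complete totally geodesic section $\Sigma$ and finite generalized Weyl group $W$, the non-polar infinitesimally polar cases in Tables~1 and~2 have orbit spaces explicitly identified as quotients of round spheres by finite reflection groups, and the sphere and real projective space cases reduce to the classification in the cited work of Gorodski--Lytchak. No essential difference from the paper's argument.
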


It is relevant to mention
the fundamental work of Dadok, who classified
polar representations~\cite{D}.
It follows from his result that a polar representation
of a connected compact Lie group is orbit equivalent to
the isotropy representation of a Riemannian symmetric space.
Herein we say that two isometric Lie group actions on Riemannian manifolds
 are \emph{orbit equivalent} if they have the same orbits after a suitable
 isometric identification of the manifolds.

Recall that a homogeneous manifold is called \emph{asystatic}
if sufficiently close points in the manifold have different isotropy groups;
equivalent definitions are: that the manifold has no nonzero invariant
vector field; or the isotropy representation has no nonzero
fixed vectors; or that the normalizer of the isotropy group is a discrete
extension thereof (this concept can be traced back to
S.~Lie~\cite[p.501]{Lie}; see also~\cite{P-Th2}).
Finally, a proper action of a Lie group on a smooth
manifold is called \emph{asystatic} if one (and hence all)
principal orbits are asystatic homogeneous manifolds~\cite{A-A}.
The relevance to us is that asystatic actions
are automatically polar
with respect to any invariant metric: a section is given by a
connected component of the set of fixed points which
contains regular points of the action
(therefore, in~\cite{PT} they are called
\emph{$G$-manifolds with canonical sections}).
Note also that the asystatic property actually depends on the
group and not only on
its orbits. It follows from our discussion below that:

\begin{sch}\label{asys}
Every polar action of a compact connected Lie group
on a compact rank one symmetric space
is orbit equivalent to an asystatic action.
\end{sch}

More precisely, each group with a polar action on such a space
admits an extension, by a finite group,
which acts with the same orbits and
asystatically.
The action of the original group will thus
be polar with respect to any Riemannian metric invariant
under the enlarged group.

Note that the sections of asystatic actions are automatically
properly embedded submanifolds, but
this is not true for general polar actions, see e.g.~\cite[p.~47]{GTh4}. 
It is an interesting open question, communicated to us by W. Ziller,
to decide whether sections of polar actions can admit self-intersections
(in the case of cohomogeneity one actions it is known that they 
cannot~\cite[Thm.~6.1]{aa}).
Since sections of polar actions on compact irreducible symmetric spaces 
of higher rank are known to be properly embedded~\cite{HPTT,KL}, one can ask:

\begin{quest}
Can the result in Scholium~\ref{asys} be extended to the case
of compact irreducible symmetric spaces of higher rank? 
What is the most general class of polar actions on complete Riemannian
manifolds for which it is true?
\end{quest}

The authors would like to express their gratitude to Alexander
Lytchak for very informative discussions and his kind hospitality during 
their stay at the University of Cologne. 

\section{Actions on classical projective spaces}\label{classical}

\subsection{Infinitesimally polar actions}
We view the complex and quaternionic projective spaces as
quotients of unit spheres under the corresponding Hopf actions
so that their quotient Riemannian metrics have sectional curvatures
lying between $1$ and $4$. Moreover, a $2$-plane with sectional curvature
equal to $1$ must be totally real.

Assume an isometric Lie group action on $\C P^m$ is given
by specifying a closed subgroup $G$ of $\SU{m+1}$.
This action lifts, via the Hopf fibration, to an isometric
action of the group $\tilde G:=G\times\U1$ on $S^{2m+1}$ with the same orbit space,
where we can view $S^{2m+1}$ as the unit sphere in $\C^{m+1}$
and $\U1$ acts by multiplication by unit complex numbers.
It follows that the $G$-action on $\C P^m$ is
infinitesimally polar if and only the
$\tilde G$-action on $S^{2m+1}$ is
infinitesimally polar.
Moreover, it follows from O'Neill's formula for Riemannian submersions
that an isometric action on the unit sphere is polar
if and only if it has cohomogeneity one
or the principal stratum of the orbit space
has constant sectional curvature~$1$, cf.~\cite[Introd.]{G-L2}.
Therefore another application of O'Neill's formula
gives that polarity of the $\tilde G$-action on $S^{2m+1}$ implies
polarity of the $G$-action on $\C P^m$ with totally real sections.
Conversely, a polar action on $\C P^m$ must have totally real
sections (see~\cite[Thm.~1.1]{P-Th2} or~\cite[Prop.~9.1]{Ly2}) and thus
the lifted action on $S^{2m+1}$ is polar.

Similarly, let an isometric Lie group action $\Q P^m$ be given
by specifying a closed subgroup
$G$ of $\SP{m+1}$. This action lifts, via the Hopf fibration, to an
isometric action of $\tilde G:=G\times\SP1$ on $S^{4m+3}$
with the same orbit space,
where we can view $S^{4m+3}$ as the unit sphere in $\Q^{m+1}$
and $\SP1$ acts by (right) multiplication by unit quaternion numbers.
It follows as above that the $G$-action on $\Q P^m$
($m>1$, cf.~\cite[p.~161]{P-Th1}) is
non-polar and infinitesimally polar if and only the
$\tilde G$-action on $S^{4m+3}$ is non-polar and
infinitesimally polar.

Therefore Tables~1 and~2 follow
from~\cite[Th.~1.3]{G-L3} and \cite[Table~II]{Str}.

\subsection{Asystatic actions}
Straume proved that a polar representation is orbit equivalent to
an asystatic representation, using the following argument;
see~\cite[Completion of proof of Thm.~1.3, pp.~11-12]{Str},
although he does not use the word ``asystatic'' explicitly.
By~\cite{D}, one may replace the representation by one with the same orbits
which is the isotropy representation
of a symmetric space. In the irreducible case, one checks case-by-case
that this representation is already asystatic unless it is of
Hermitian type, in which case it becomes asystatic after adjunction
of an extra element to the group, namely, complex conjugation, without
changing its orbits.
In the reducible case, the representation splits as the direct
product of irreducible
isotropy representations of symmetric spaces, and it also becomes
asystatic by adjoining one single extra element that acts as
complex conjugation.

As a consequence of Straume's result, also the polar actions on
classical projective
spaces are asystatic. In fact, a polar $G$-action,
where $G \subset \SU{m+1}$, on a
complex projective space $M = \C P^m$ lifts to a polar $\tilde G$-action
on the corresponding
sphere $\tilde M = S^{2m+1}$, which can be replaced by a group
acting with the same orbits via the isotropy representation of an
Hermitian symmetric space, see
\cite[Theorem~3.1]{P-Th1}.
In its turn, the latter group can by the above
be enlarged to a group $\tilde K$ acting asystatically
on $\tilde M$ with the same orbits, by adjunction of complex conjugation,
and which induces an action of a group $K$
on $M$ orbit equivalent to the original $G$-action.
If $\tilde p\in\tilde M$, $p\in M$ are $\tilde K$-regular,
resp. $K$-regular
points, where $\tilde p$ lies above~$p$ and
$\tilde H=\tilde K_{\tilde p}$, $H=K_p$ are
the associated principal isotropy groups,
then the projection $\tilde K\to K$ induces an
isomorphism $\tilde H\cong H$ with respect to which the isomorphism
$T_p(\tilde K\cdot \tilde p)\cap\mathcal H_{\tilde p} \cong T_p(K \cdot p)$
(induced by the projection $\tilde M\to M$, where $\mathcal H$ denotes the
horizontal distribution) is equivariant.
Hence there exist no $H$-fixed directions
in $T_p(K \cdot p)$, i.e.~$K$ acts asystatically on~$M$.

For actions on quaternionic projective space, we may use an analogous method,
however, the argument has to be somewhat refined. Assume now $G \subset \SP{m+1}$
acts polarly on the quaternionic projective space $M = \Q P^m$.
Then the action lifts to a polar
$\tilde G$-action on the corresponding sphere $\tilde M = S^{4m+3}$.
This action can be replaced by a group acting
on $\H^{m+1} = \H^{m_1} \oplus \dots \oplus \H^{m_r}$ with the same
orbits via the  isotropy representation of the product of $r$~quaternion K\"ahler symmetric spaces $Q_1, \dots, Q_r$, where $Q_1,\ldots,Q_{r-1}$ are of rank one
and $Q_r$ can be of arbitrary rank, cf.~\cite[Theorem~4.1]{P-Th1}.
The isotropy representations of quaternion Kähler symmetric spaces
are known to be asystatic in all but one case:
there is exactly one case of quaternion K\"ahler symmetric space
which is also a Hermitian symmetric space, namely, the Grassmann manifold
$\SU{n+2}/\mathsf{S}(\U n\times\U2)$ of complex $2$-planes in $\C^{n+2}$,
in which we need to pass to a $\Z_2$-extension to make it asystatic.
However, as remarked in~\cite{P-Th1} after Proposition~2A.2,
this action does not descend to~$M$ if $r \ge 2$.
To remedy this, one takes the subgroup
$K := \SP{m_1} \times \dots \times \SP{m_{r-1}} \times H_r \times \SP1$
which acts on $\H^{m+1}$ with the same orbits, where
the $\SP1$-factor acts by right quaternionic multiplication
and $(K_r=H_r\times\SP1,\H^{m_r}\otimes_{\mathbb H}\H)$
corresponds to the isotropy representation of
$Q_r$. The $K$-action on $\tilde M$,
\begin{equation*}\label{product}
 (K=\SP{m_1}\times\cdots\times\SP{m_{r-1}}\times H_r\times\SP1,
 \H^{m+1} = \H^{m_1} \oplus \dots \oplus \H^{m_r}),
\end{equation*}
descends to an
action on $M$ which is orbit equivalent to the original action;
let us show that this $K$-action is asystatic.
Up to conjugation of $H_r$ in $\SP{m_r}$, we may
assume that there exists a regular point $p\in\H^{m+1}$ of the form
$(p_1,\ldots,p_r)$, where each $p_\ell$ is the first element
of the canonical basis of $\H^{m_\ell}$. Then the corresponding
principal isotropy group $K_p$ is isomorphic to
\[
\SP{m_1-1}\times\cdots\times\SP{m_{r-1}}\times (K_r)_{p_r},
\]
where $(K_r)_{p_r}$ is the principal isotropy group
of $(K_r,\H^{m_r}\otimes_{\mathbb H}\H)$.
The group $K_p$ acts on
$\H^{m+1}=\H^{m_1}\oplus\cdots\oplus\H^{m_r}$ by
\begin{align*}
(&A_1, \dots, A_{r-1}, (A_r,q)) \cdot (x_1,\ldots,x_r)=\\
&=((qx_1^0q^{-1},A_1x_1'q^{-1}),\dots,(qx_{r-1}^0q^{-1},A_{r-1}x_{r-1}'q^{-1}),
A_rx_rq^{-1}),
\end{align*}
where $A_\ell \in \SP{m_\ell-1}$ for $\ell<r$, $(A_r,q) \in (K_r)_{p_r} \subset H_r \times \SP1$, $x_\ell=(x_\ell^0,x_\ell') \in \H\oplus\H^{m_\ell-1}$ for $\ell<r$.
By polarity, the tangent space
to the $K$-principal orbit through $p$ is the direct sum of the tangent spaces
of the $K$-orbits through the $p_\ell$.
First assume $Q_r$ is not a
Hermitian symmetric space, namely, it is not the
Grassmann manifold of complex $2$-planes
in~$\C^{m_r+2}$. Then,
by~\cite{Str}, the $K_r$-action on $\H^{m_r}$ is asystatic.
Therefore the $(K_r)_{p_r}$-action on $T_{p_r}(K_r \cdot p_r)$ has no fixed
directions; note that the $(K_r)_{p_r}$-action on $\H^{m_r}$ is the
effectivized $K_p$-action on that space. In order
to see that $K_p$ has no fixed tangent directions in the other
summands $\H^{m_1},\ldots,\H^{m_{r-1}}$, we argue as follows.
By~\cite{Teb}, the section of $(K_r,\H^{m_r})$ through $p_r$ is
totally real, namely, it is orthogonal to its image under
right multiplication by an imaginary unit quaternion. By asystaticity,
the only $(K_r)_{p_r}$-fixed direction in the quaternionic span of $p_r$
is $\R\,p_r$. Since the effective group acting on $\H^{m_\ell}$ for
$\ell<r$ is the full $\SP{m_\ell}\cdot\SP1$, this behavior is reproduced in $\H^{m_\ell}$,
namely, the only $K_p$-fixed direction in the quaternionic span of $p_\ell$
is $\R\,p_\ell$. Since the $K_p$-action on $\H^{m_\ell}$ contains the subgroup $\{1\}\times\SP{m_\ell-1}\subset\SP{m_\ell}$ which does not fix non-zero vectors
in $\{0\}\oplus\H^{m_\ell-1}$, this already shows that there are no further fixed directions in $\H^{m_\ell}$.

In case $Q_r$ is the complex Grassmannian of complex $2$-planes
in~$\C^{m_r+2}$,
we argue analogously by replacing
the group $K_r$ with the group $\hat K_r$
generated by $K_r$ and the element $\sigma_r$
that acts by complex conjugation on~$\H^{m_r}=\C^{m_r}\oplus\C^{m_r}j$, and trivially
on~$\H^{m_\ell}$ for $\ell<r$.
Note that $\sigma_r$ is given on $\H^{m_r}$ by
$L_j \circ R_j^{-1}$ (left and right multiplication),
which shows that $\sigma_r \in \SP{m_r} \times \SP1$.
The resulting group $\hat K$ has the same orbits as $K$
and acts asystatically on $\H^{m+1}$.
It is generated by $K$ and an element~$\sigma$
that acts as $L_j \circ R_j^{-1}$
on each $\H^{m_\ell}$ (again due to the fact that the effective
group on $\H^{m_\ell}$ for $\ell<r$ is $\SP{m_\ell}\cdot\SP1$),
which fixes $p$ and preserves quaternionic lines
in $\H^{m+1}$, so $\hat K$ induces an asystatic action on $\H P^m$.

\section{Actions on the Cayley projective plane}

We first prove a simple but useful criterion for asystaticity.

\begin{lem}\label{crit-asys}
Let $G$ act properly and isometrically on a complete Riemannian
manifold~$M$. Assume there exists a point $q\in M$ and a
principal isotropy group $H$ which fixes~$q$ and acts on
the tangent space $T_qM$ with fixed point set of dimension
equal to the cohomogeneity of the $G$-action on $M$. Then the
$G$-action on $M$ is asystatic.
\end{lem}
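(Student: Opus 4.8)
The plan is to show that the hypothesis forces the fixed point set of $H$ in $M$ to be (an open subset of) a section for the $G$-action, and then invoke the fact — recalled in the introduction — that an action admitting a fixed point set of a principal isotropy group as a section is asystatic. Concretely, let $H$ be the principal isotropy group fixing $q$, and set $\Sigma = \mathrm{Fix}(H, M)^0$, the connected component through $q$ of the fixed point set of $H$ acting on $M$. This is a totally geodesic submanifold of $M$, being a connected component of the fixed point set of a group of isometries, and its tangent space at $q$ is $T_q\Sigma = \mathrm{Fix}(H, T_qM)$, which by hypothesis has dimension equal to the cohomogeneity $k$ of the $G$-action.

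First I would observe that $\Sigma$ meets the principal orbit through $q$ orthogonally at $q$: indeed the tangent space $T_q(G\cdot q)$ carries the isotropy representation of $H$, which — since $H$ is a \emph{principal} isotropy group — has no nonzero fixed vectors (the slice representation of a principal orbit is trivial, so $H$ fixes only the directions tangent to the orbit within the slice... more precisely, $\mathrm{Fix}(H, T_q(G\cdot q)) = 0$ because $H$ acts trivially on a slice and the orbit direction carries no $H$-fixed vectors by the principality/regularity of $q$). Hence $T_q\Sigma \subseteq \nu_q(G\cdot q)$, the normal space to the orbit; but $\dim \nu_q(G\cdot q) = k = \dim T_q\Sigma$, so in fact $T_q\Sigma = \nu_q(G\cdot q)$. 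Thus $\Sigma$ is a totally geodesic submanifold whose tangent space at $q$ is exactly the normal space to the principal orbit there.

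Next I would upgrade this pointwise statement to the claim that $\Sigma$ is a section. Since $H$ fixes $\Sigma$ pointwise and $H$ is the principal isotropy at $q$, for every regular point $p \in \Sigma$ near $q$ the group $H$ is contained in $G_p$; by dimension count ($\dim G_p = \dim H$ for $p$ regular, and conjugacy of principal isotropy groups) one gets $G_p^0 = H^0$, so $\Sigma$ consists of points whose orbits it meets orthogonally (the same normal-space argument as above applies at each regular $p\in\Sigma$). To see $\Sigma$ meets every orbit, one uses the standard fact that a totally geodesic submanifold through a regular point, of complementary dimension to the principal orbits and everywhere orthogonal to the orbits it meets, is a section — equivalently, one checks that $G\cdot\Sigma$ is open (its differential at $(e,q)$ is surjective: $T_q(G\cdot q) + T_q\Sigma = T_qM$) and closed (by properness and total geodesy), hence all of $M$. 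Therefore $\Sigma$ is a section consisting of a connected component of the fixed point set of the principal isotropy group $H$, which is precisely the definition of a $G$-manifold with canonical sections, i.e.\ an asystatic action, as recalled in the introduction (following~\cite{PT,A-A}).

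\textbf{Main obstacle.} The delicate point is the passage from the infinitesimal statement at the single point $q$ to a global section: one must ensure that $\Sigma = \mathrm{Fix}(H,M)^0$ actually meets \emph{all} orbits and is orthogonal to every orbit it meets, not merely near $q$. The properness of the action and the total geodesy of $\Sigma$ are what make this work — $G\cdot\Sigma$ is open by the submersion argument and closed because $\Sigma$ is a closed totally geodesic submanifold and the action is proper — but writing this carefully is where the real content lies. One must also verify that $\mathrm{Fix}(H, T_q(G\cdot q)) = 0$ genuinely uses that $H$ is a \emph{principal} (not merely some) isotropy group; this is where the hypothesis that $H$ be a principal isotropy group, rather than an arbitrary one, enters essentially.
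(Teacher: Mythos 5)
Your argument silently assumes that $q$ is a regular point: you speak of ``the principal orbit through $q$,'' assert $\dim \nu_q(G\cdot q)=k$ (the cohomogeneity), and justify $\mathrm{Fix}(H,T_q(G\cdot q))=0$ ``by the principality/regularity of $q$.'' But the lemma does not assume this --- $H$ is only required to be \emph{a} principal isotropy group that happens to fix $q$, and in every application in the paper $q$ is a singular point (the paper's proof in fact begins by reducing to the case where $q$ is \emph{not} regular). If $q$ were regular with $G_q=H$, the lemma would be immediate from the definition of asystatic: $\nu_q(G\cdot q)$ is an $H$-fixed subspace of dimension $k$, so the hypothesis already forces $\mathrm{Fix}(H,T_q(G\cdot q))=0$, and all principal orbits are equivariantly diffeomorphic. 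The entire content of the lemma lies in passing from a (typically singular) point $q$ to a regular one, and your proof never makes this passage. Note also that your justification of $\mathrm{Fix}(H,T_q(G\cdot q))=0$ is circular --- ``the isotropy representation of $H$ has no nonzero fixed vectors since $H$ is principal'' is exactly the asystaticity being proved --- and that at a singular $q$ your open-map argument collapses, since then $\dim T_q(G\cdot q)+\dim T_q\Sigma<\dim M$ and the differential of $(g,x)\mapsto g\cdot x$ cannot be surjective at $(e,q)$.

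What is actually needed (and what the paper does) is the following. Choose a nonzero $v\in\nu_q(G\cdot q)$ fixed by $H$ and regular for the slice representation of $G_q$; then $p_t=\exp_q(tv)$ is a $G$-regular point for small $t>0$ with $G_{p_t}\supseteq H$, hence $G_{p_t}=H$, since a principal isotropy group containing a principal isotropy group must equal it. The points $p_t$ lie in the same component of $\mathrm{Fix}(H,M)$ as $q$, a totally geodesic submanifold of dimension $\dim\mathrm{Fix}(H,T_qM)=k$, so by continuity $\dim\mathrm{Fix}(H,T_{p_t}M)\le k$. Since $\nu_{p_t}(G\cdot p_t)$ is an $H$-fixed subspace of $T_{p_t}M$ of dimension $k$, it exhausts $\mathrm{Fix}(H,T_{p_t}M)$, and therefore $H=G_{p_t}$ has no nonzero fixed vector in $T_{p_t}(G\cdot p_t)$ --- which is asystaticity. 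Your plan of exhibiting $\mathrm{Fix}(H,M)^0$ as a section is both unnecessary for the conclusion and, as set up, unable to get off the ground at a singular $q$.
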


\begin{proof} We may assume the action is non-transitive
and the point $q$ is not regular. There is a non-zero
normal vector~$v\in\nu_q(Gq)$
which is regular for the slice representation of $G_q$
and fixed under $H$. There is a sequence of regular points
$(p_n:=\exp_q(t_nv))$, where $t_n\to0$ as $n\to\infty$,
which converges to $q$.
The isotropy representations $(H=G_{p_n},T_{p_n}(G \cdot p_n))$ of the principal
orbits $G \cdot p_n$ are all equivalent one to the other, for all~$n$.
By continuity, the dimension of the fixed point set of $H$ in
$T_{p_n}M$ is not larger than the dimension of the fixed point
set of $H$ in $T_qM$, hence it is equal to the cohomogeneity of the
$G$-action on $M$. \end{proof}

We will also use the following lemma~\cite[Th.~6]{K-P}.

\begin{lem}\label{polar-min}
Let $\rho:G\to \OG{V}$ be a faithful irreducible representation of a
compact connected Lie group $G$ of cohomogeneity at least~$2$.
Assume the restriction of $\rho$ to a non-trivial closed connected subgroup $H$ is polar.
Then the $G$- and $H$-actions on $V$ are orbit equivalent.
\end{lem}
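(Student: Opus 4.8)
The plan is to reduce the statement to the equality of cohomogeneities $c(G)=c(H)$, and then to extract that equality from the polarity of the $H$-action. Since $H\subseteq G$ we have $H\cdot v\subseteq G\cdot v$ for every $v\in V$, and clearly $c(H)\geq c(G)\geq2$. The set of points regular for both actions is dense and open in $V$; for such a point $v$ the orbit $G\cdot v$ is a connected submanifold containing the submanifold $H\cdot v$, so $G\cdot v=H\cdot v$ as soon as $\dim G\cdot v=\dim H\cdot v$, that is, as soon as $c(G)=c(H)$. Moreover, once $G\cdot v=H\cdot v$ holds on a dense set it holds everywhere, because for a compact group the orbit map $v\mapsto G\cdot v$ is continuous in the Hausdorff topology on compact subsets. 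Hence it suffices to prove $c(G)=c(H)$.

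To this end I would introduce a section $\Sigma$ of the polar $H$-action: a linear subspace of $V$ with $\dim\Sigma=c(H)$ which meets every $H$-orbit — hence every $G$-orbit — and is orthogonal to the $H$-orbits. The equality $c(G)=c(H)$ is then equivalent to $\Sigma$ being a section also for the $G$-action: at a point $v\in\Sigma$ regular for both actions one has $T_v\Sigma=\nu_v(H\cdot v)$, whence $\nu_v(G\cdot v)\subseteq T_v\Sigma$ (because $H\cdot v\subseteq G\cdot v$) and $\dim\bigl(T_v\Sigma\cap T_v(G\cdot v)\bigr)=c(H)-c(G)$. Suppose, for contradiction, that $c(H)>c(G)$. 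Then the natural projection $V/H\to V/G$ is a surjection between metric spaces with $\dim V/H=c(H)>c(G)=\dim V/G$, so some fibre is positive-dimensional; since a fibre of this map consists of the $H$-orbits contained in a single $G$-orbit, there is a principal $G$-orbit meeting $\Sigma$ in a positive-dimensional family of pairwise distinct principal $H$-orbits. The key structural input is now that through a regular point of a polar action there passes a \emph{unique} section: thus, whenever $v,v'\in\Sigma$ are regular and $v'=gv$ for some $g\in G$, the isometry $g$ must carry $\Sigma$ (the section through $v$) onto the section through $v'$, which is again $\Sigma$, so $g\in N_G(\Sigma)$. Applied along the family above, this shows that $N_G(\Sigma)$ acts on $\Sigma$ with positive-dimensional orbits; equivalently, the image $\bar W$ of $N_G(\Sigma)$ in $\mathrm{O}(\Sigma)$ is an infinite compact group containing the finite reflection group $W(H,\Sigma)$.

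Ruling out this last configuration is the main obstacle, and it is the only place where the hypotheses — irreducibility of $\rho$ and $c(G)\geq2$ (which excludes the degenerate cohomogeneity-one behaviour) — must be used in an essential way. The route I would take is to invoke Dadok's classification of polar representations~\cite{D} and replace the $H$-action by an orbit-equivalent $s$-representation, so that $\Sigma$, the Weyl group $W(H,\Sigma)$, and hence the possible overgroups $N_G(\Sigma)\subseteq\mathrm{O}(\Sigma)$ become explicit. Since $N_G(\Sigma)$ preserves the orthogonal decomposition $V=\Sigma\oplus\Sigma^{\perp}$ into two non-zero $N_G(\Sigma)$-submodules (the summand $\Sigma^{\perp}$ is non-zero because $\rho|_H$ is faithful and $H\neq\{1\}$), one then has to combine this with the classification of faithful irreducible representations of cohomogeneity $\geq2$ and their closed subgroups to conclude that no such $G$ with $H\subseteq G$ can make $N_G(\Sigma)$ act non-trivially on $\Sigma$; I expect this to be the delicate part and, if a purely structural argument does not suffice, it will come down to a case-by-case inspection. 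Once $c(H)=c(G)$ is established, the reduction of the first paragraph yields that the $G$- and $H$-actions on $V$ are orbit equivalent.
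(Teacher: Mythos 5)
First, note that the paper does not actually prove this lemma: it is quoted from Kollross--Podest\`a \cite[Th.~6]{K-P}, so there is no internal proof to compare yours against. Judged on its own merits, your first paragraph (reduction of orbit equivalence to the equality of cohomogeneities $c(G)=c(H)$) and the first half of your second paragraph (a section $\Sigma$ of the $H$-action meets every $G$-orbit, and $c(H)>c(G)$ would force some principal $G$-orbit to meet $\Sigma$ in a positive-dimensional family of distinct principal $H$-orbits) are correct. But the step that follows is not: if $v,v'\in\Sigma$ are $H$-regular and $v'=gv$ with $g\in G$, uniqueness of sections does \emph{not} give $g\Sigma=\Sigma$. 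Indeed $g\Sigma=g\,\nu_v(H\cdot v)=\nu_{v'}(gHg^{-1}\cdot v')$ is the section through $v'$ \emph{of the conjugate action of $gHg^{-1}$}; it coincides with the $H$-section through $v'$ only if $g$ maps $H$-orbits to $H$-orbits, which is precisely what is unknown. (The property ``regular points of $\Sigma$ lying in the same $G$-orbit are conjugate under $N_G(\Sigma)$'' is the Palais--Terng property of a section \emph{for the $G$-action}, so invoking it here presupposes the conclusion.) Hence the claim that $N_G(\Sigma)$ acts on $\Sigma$ with positive-dimensional orbits, i.e.\ that $\bar W\subset \mathrm{O}(\Sigma)$ is infinite, does not follow.

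Second, even granting that configuration, your third paragraph does not rule it out: you say one ``would'' invoke Dadok's classification \cite{D} and then a case-by-case inspection of faithful irreducible representations of cohomogeneity at least~$2$ and their closed subgroups, and you explicitly flag this as not carried out. Since this is exactly where the hypotheses (irreducibility of $\rho$, $c(G)\ge 2$, $H$ non-trivial, closed, connected) must do their work, the essential content of the lemma is missing from the proposal; what is actually established is only the easy equivalence between orbit equivalence and $c(G)=c(H)$. For a complete argument you should consult the proof of \cite[Th.~6]{K-P}, which obtains $c(G)=c(H)$ from the structure theory of polar representations rather than from the normalizer of $\Sigma$ in $G$.
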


We will throughout use Dynkin's tables of maximal
connected closed subgroups of compact
Lie groups~\cite{Dyn0}. The identity component of the isometry group of
$M=\Ca P^2$ is the exceptional Lie group $\F$, and its isotropy group
at a fixed basepoint is $\Spin9$. The maximal connected closed
subgroups of $\F$ are, up to
conjugacy~\cite[Table~12 and Thm.~14.1]{Dyn0}:
\[ \Spin9,\quad \SP3\cdot\SP1,\quad \SU3\cdot\SU3,\quad \G\cdot\liegr{A}_1^8,\quad \liegr{A}_1^{156}. \]
(By the way, the group $\widetilde{\SU2}\cdot\SU4$ is not maximal
and occurs in Dynkin's tables because of a mistake, see e.g.~\cite{G-R2}.)
Here $\liegr{A}_1$ denotes a simple group of rank~$1$ and the upper
index refers to its Dynkin index as a subgroup of $\F$, as defined
in~\cite{Dyn0}.

\subsection{$\Spin9$ and its subgroups}\label{spin9}

The action of $\Spin9$ is its isotropy action on the homogeneous
space $M=\F/\Spin9$.
Since $M$ is a symmetric space of rank $1$, this action has cohomogeneity
$1$ and it is thus hyperpolar. The principal orbits are distance
spheres $S^{15}=\Spin9/\Spin7$ centered at the basepoint, and their
isotropy representation decomposes as
$\R^7\oplus\R^8$, where $\R^7$ denotes the
vector representation of $\Spin7$ and $\R^8$ denotes its spin representation.
It follows that $\Spin9$ acts asystatically on $M$.

We proceed to consider the maximal connected closed subgroups of
$\Spin9$ of rank greater than one (cf.~subsection~\ref{rk1}), which are
\[
\Spin8,\quad \Spin7\cdot\SO2,\quad \Spin6\cdot\Spin3,\quad \Spin5\cdot\Spin4
\]
and an embedding of $\SP1\cdot\SP1$ coming from the representation
of $\SO4$ on the space of real traceless symmetric $4\times4$ matrices.

\subsubsection{$\Spin8$}
The slice representation of $\Spin8$ at the basepoint decomposes as
the direct sum of the half-spin representations, so the principal isotropy
of its action on $M$ is $\G$. Now the isotropy representation of a
principal orbit $\Spin8/\G$ is easily seen to be $\R^7\oplus\R^7$.
It follows that $\Spin8$ acts asystatically on~$M$.

It remains to determine whether subgroups of $\Spin8$ can act infinitesimally polar. To this end, we consider the maximal connected subgroups in $\Spin8$ of rank greater than one.  Since outer automorphisms of $\Spin8$ are restrictions of inner automorphisms of~$\F$, it suffices to consider maximal connected subgroups up to arbitrary automorphisms of $\Spin8$, and
they are (cf.~\cite[Prop.~3.3]{K}):
\[
\Spin7,\quad \Spin6\cdot\SO2,\quad \Spin5\cdot\Spin3,\quad \Spin4\cdot\Spin4,\quad \pi^{-1}(\rm Ad\,{\SU3}),
\]
where $\pi \colon \Spin8 \to \SO8$ is a covering map and where $\rm Ad\,{\SU3}$ is the group given by the $8$-dimensional irreducible representation of~$\SU3$.

We do not need to discuss $\Spin7$, $\Spin6\cdot\SO2$, $\Spin5\cdot\Spin3$,
or $\Spin4\cdot \Spin4$ or any of their closed subgroups now,
since each one
of them is contained in one of the subgroups $\Spin7\cdot\SO2$,
$\Spin6\cdot\Spin3$, $\Spin5\cdot\Spin4$ of $\Spin9$, and the latter will
be treated below. We do not need to discuss the group
$\pi^{-1}(\rm Ad\,{\SU3})$, either, since it is contained in
$\SU3\cdot\SU3$, see \cite[p.~195]{Dyn0}.

\subsubsection{$\Spin7\cdot\SO2$ and $\Spin6\cdot\Spin3$}\label{7263}
For the next two group actions, it will be convenient to use
the notion of Weyl involution of a compact Lie group with
respect to a maximal torus~\cite[\S2]{On}.
Recall that a regular subalgebra of a compact semisimple
Lie algebra $\Lg$ is a subalgebra which is normalized by a maximal torus
of $\mathrm{Int}(\Lg)$. Subalgebras of maximal rank are obviously
regular.
It follows easily that for any regular subalgebra, there exists
a Weyl involution preserving the
subalgebra and restricting to its Weyl involution.
We apply these ideas to the case of the group $\F$,
where any Weyl involution is an inner automorphism.

The slice representation of $\Spin7\cdot\SO2$ at the basepoint~$p$
is $\R^8\otimes_{\mathbb R}\R^2$, so the principal isotropy group of its
action on $M$ is $\SU3\rtimes\Z_2$, where
$\SU3\subset\G\subset\Spin7$ and $\Z_2$ is diagonally embedded
in $\Spin7\cdot\SO2$. One computes the
isotropy representation of a principal orbit to be
$2\C^3\oplus2\R$, where the $\Z_2$-factor acts trivially
on $2\R$ (one summand corresponds to $\mathfrak{so}(2)$ and the other
to the center of $\mathfrak{u}(3)\subset\mathfrak{so}(7)$).
This shows the $\Spin7\cdot\SO2$-principal orbit is not asystatic.
To deal with this, we enlarge $\Spin7\cdot\SO2$ by adjoining
$w\in\F$, where $\psi=\mathrm{Ad}_w$
is a Weyl involution of~$\F$ relative to a maximal torus
contained in $\Spin7\cdot\SO2$. Note that $\psi$ preserves $\Spin9$,
which does not have outer automorphisms, so we may
take $w\in\Spin9$. Now
$\psi$ induces an isometry of $M$ that
maps $\Spin7\cdot\SO2$-orbits to $\Spin7\cdot\SO2$-orbits,
and thus it induces an isometry of the corresponding orbit space.
Since $\psi$ fixes
the basepoint $p$, it also induces an isometry of the orbit space of the
slice representation of~$\Spin7\cdot\SO2$ at~$p$. The latter
is orbit equivalent to
$(\SO8\times\SO2,\R^8\otimes_{\mathbb R}\R^2)$~\cite{D,EH}
and its orbit space is thus isometric to the cone over the
interval of length~$\pi/4$.
The only non-trivial isometry interchanges the endpoints of the interval,
but such an isometry cannot be induced by $\psi$ because the endpoints
parametrize singular orbits of different dimensions,
namely, $8$ and $13$~\cite[p.~436]{HPT}. It follows that $\psi$
preserves the orbits in $T_pM$, and hence preserves the orbits in $M$
by using the exponential map at~$p$.
Now the enlarged group has the same orbits in $M$ and
its principal isotropy group contains $w$. Since $\psi$
acts as minus identity on the Lie algebra of the maximal torus
of $\Spin7\cdot\SO2$, this shows that the enlarged group acts asystatically.

Consider now $\Spin6\cdot\Spin3=\SU4\cdot\SU2$; this case is very
similar to the previous one. Its slice representation
at~$p$ is $\C^4\otimes_{\mathbb C}\C^2$, so the principal isotropy
group of its action on $M$ is~$H\cong\U2$, and one computes
that the isotropy representation of a principal orbit
has fixed point set of dimension $2$, namely, the
Killing orthogonal of $\Lh\cong\mathfrak{u}(2)$ in the
Lie algebra of the maximal torus of $\SU4\cdot\SU2$.
We enlarge $\SU4\cdot\SU2$ by adjoining $w\in\Spin9$
where $\psi=\mathrm{Ad}_w$ is a Weyl involution
of~$\F$ relative to a maximal torus contained in $\SU4\cdot\SU2$.
In order to see that the enlarged group has the same orbits,
it suffices, as above, to note that $\psi$ fixes~$p$ and
that it cannot induce a non-trivial isometry of the
orbit space of the slice representation at~$p$. Indeed the latter
is orbit equivalent to $(\mathsf{S}(\U4\times\U2),\C^4\otimes_{\mathbb C}\C^2)$~\cite{D,EH}, so
its orbit space is isometric to the cone over the
interval of length $\pi/4$, where the endpoints parametrize
orbits of dimensions~$9$ and~$12$~\cite[p.~436]{HPT}.
Now the enlarged group is orbit equivalent and
its principal isotropy group contains $w$,
so it has no non-zero fixed vectors in the isotropy representation.
This proves that the enlarged group acts asystatically on $M$.

It is now easy to see that
$\Spin7\cdot\SO2$ and $\SU4\cdot\SU2$ do not have proper
closed subgroups acting infinitesimally on $M$. In fact, their slice
representations at~$p$ are irreducible of cohomogeneity two, so,
owing to Lemma~\ref{polar-min}, any subgroup of one of those
groups acting infinitesimally polar on $M$ must be such that
the slice representations at $p$
of the group and its subgroup are orbit-equivalent, but according
to~\cite{D,EH} there cannot exist such a subgroup.

The groups obtained above by adjoining~$w$ to~$\Spin7 \cdot \SO2$
and~$\SU4 \cdot \SU2$, respectively, are well known: they are just the
subgroups $\pi^{-1}(\SOxO72)$ and $\pi^{-1}(\SOxO63)$ of~$\Spin9$,
where $\pi \colon \Spin9 \to \SO9$ is the universal covering map. Indeed,
it is easy to see that the action of~$\psi \in \Aut(\Spin9)$ is given by
conjugation with elements of those subgroups. However, we have preferred
the above alternate point of view, as the Weyl involution appears to be a
useful notion to prove asystaticity in this context, cf.~subsection~\ref{su3su3}.

\subsubsection{$\Spin5\cdot\Spin4$}\label{spin54}
Consider the restriction of the spin representation of $\Spin9$ to the
subgroup $\Spin5\cdot\Spin4$. This representation can be regarded as
$\H^2 \otimes_{\H} (\H \oplus \H)$, where the representation of
$\Spin5\cdot\Spin4 \cong \SP2\cdot(\SP1\times\SP1)$ is given in such
a way that the $\SP2$-factor acts on both copies of $\H^2$ by its
standard representation and where the action of $\SP1 \times \SP1$ on
$\H \oplus \H$ is given componentwise by the standard representation.
This reducible representation is the restriction of
$(\SP2\cdot\SP2,\Q^2\otimes_{\H}\Q^2)$, which is irreducible and of
cohomogeneity~$2$, hence it follows from Lemma~\ref{polar-min} that
neither $\Spin5\cdot\Spin4$ nor any non-trivial connected
closed subgroup thereof can act polarly on~$M$.

\subsubsection{$\SP1\cdot\SP1$}
This maximal connected subgroup of~$\Spin9$ does not act infinitesimally
polar on $M$ since the restriction of the
spin representation of $\Spin9$ on $T_pM=\R^{16}$ to
$\SU2\cdot\SU2$ yields $S^3(\C^2)\otimes_{\mathbb H}\C^2\oplus\C^2\otimes_{\mathbb H} S^3(\C^2)$, namely, a sum
of two polar representations, each with finite principal isotropy groups,
which cannot be polar by~\cite[Th.~4]{D}; in addition, since each irreducible
summand has cohomogeneity two, no subgroup of $\SP1\cdot\SP1$ can act
infinitesimally polar on $M$ (Lemma~\ref{polar-min}).

\subsection{$\SP3\cdot\SP1$ and its subgroups}\label{sp3sp1}

This is a symmetric subgroup of $\F$ so that we have a so-called
\emph{Hermann action} on $M$~\cite{K}. One singular orbit is a totally geodesic
$\Q P^2$ which is also the fixed point set of the $\SP1$-factor. We choose
$p\in\Q P^2$. Then the isotropy subgroup at~$p$ is $\SP1\cdot\SP2\cdot\SP1$,
namely, the group described in subsection~\ref{spin54}. Its
action on $T_pM$ is a sum of two representations of cohomogeneity
one, that is, the isotropy representation of the singular orbit
$T_p(\Q P^2)=\Q\otimes_{\mathbb H}\Q^2$ and the slice
representation $\nu_p(\Q P^2)=\Q^2\otimes_{\mathbb H}\Q$.
In particular the $\SP3\cdot\SP1$-action on $M$ has cohomogeneity one and
it is thus hyperpolar. The principal isotropy group
is obtained from the slice representation: it is the
$\SP1^3$-subgroup given by $q_2=q_4$ in the
diagonal embedding
\[ (q_1,q_2,q_3,q_4)\in\SP1^4\mapsto(q_1,\mathrm{diag}(q_2,q_3),q_4)
\in\SP1\cdot\SP2\cdot\SP1. \]
The action of the principal isotropy group
on $(x,y)\in\Q^2\cong\nu_p(\Q P^2)$ is given by
\[ \left(\begin{array}{cc}q_2&0\\0&q_3\end{array}\right)
\left(\begin{array}{c}x\\y\end{array}\right)q_2^{-1} \]
with fixed point set $\left(\begin{array}{c}\R\\0\end{array}\right)$, and its
action on $(x,y)\in\Q^2\cong T_p(\Q P^2)$ is given by
\[ \left(\begin{array}{cc}q_2&0\\0&q_3\end{array}\right)
\left(\begin{array}{c}x\\y\end{array}\right)q_1^{-1} \]
with trivial fixed point set. In view of Lemma~\ref{crit-asys},
this proves asystaticity of the $\SP3\cdot\SP1$-action on $M$.
It remains to determine whether there are any proper closed subgroups of $\SP3\cdot\SP1$ which act infinitesimally polar.

The maximal connected subgroups of $\SP3$ are
\[
\SP2\cdot\SP1,\quad \U3,\quad \liegr A_1^{35}.
\]
By \cite[Th.~15.1]{Dyn0}, it follows that the maximal connected subgroups of $\SP3\cdot\SP1$ are
\[
\SP2\cdot\SP1\cdot\SP1,\quad \U3\cdot\SP1,\quad \liegr A_1^{35}\cdot\SP1,\quad \SP3\cdot\U1.
\]
\subsubsection{$\SP2\cdot\SP1\cdot\SP1$}
This group is exactly the subgroup $\Spin5\cdot\Spin4$ of $\Spin9$, treated in~\ref{spin54}.

\subsubsection{$\U3\cdot\SP1$}
Regular subgroups, in particular, subgroups of maximal rank in a compact
semisimple Lie group can be studied using Borel-de-Siebenthal theory,
cf.~\cite[Ch.~1, \S3, Thm.~16]{On3}. The maximal
connected subgroups of maximal
rank are obtained by deleting nodes from the extended Dynkin diagram, the
remaining nodes then correspond to a set of simple roots of the subalgebra.
The extended Dynkin diagram of $\F$ is the following.
\[
\begin{xy}
\POS (-10,0) ="z",
\POS (0,0) *\cir<3pt>{} ="a",
\POS (10,0) *\cir<3pt>{} ="b",
\POS (20,0) *\cir<3pt>{} ="c",
\POS (30,0) *\cir<3pt>{} ="d",
\POS (40,0) *\cir<3pt>{} ="e",
\POS (50,0)  ="f",
\POS "a" \ar@{-}^<<<<{} "b",
\POS "b" \ar@{-}^<<<<{} "c",
\POS "c" \ar@{=>}^<<<<{} "d",
\POS "d" \ar@{-}^<<<<{} "e",
\POS "z" \ar@{}^<<<<{} "a",
\end{xy}
\]
The subgroup $\SP3\cdot\SP1$ is obtained by deleting the second node from the left, the subgroup $\SU3\cdot\SU3$ by deleting the middle node. It follows from~\cite[\S3,~Thm.~16]{On3} that a system of simple roots of the subgroup $\U3\cdot\SP1 \subset \SP3\cdot\SP1$ is obtained by deleting the second and middle node from the above diagram, which shows that $\U3\cdot\SP1$ is also contained in $\SU3\cdot\SU3$. The latter group will be treated in subsection~\ref{su3su3}.

\subsubsection{$\liegr A_1^{35}\cdot\SP1$}
Suppose this group or a proper closed subgroup of it acts infinitesimally
polar on $M$. Then this action restricts to an infinitesimally polar
action of a rank one group
on the fixed point set $\Q P^2$ of the $\SP1$-factor. Due to
Theorem~\ref{main}(b), which has already been proved in
section~\ref{classical}, such an action is in fact polar.
However the section, as a totally geodesic submanifold of $\Q P^2$,
can have dimension at most $4$. On the other hand, the cohomogeneity of the action on $\liegr A_1^{35}\cdot\SP1$
on $\Q P^2$ is (since the $\SP1$-factor acts trivially) at least~$5$, a contradiction.

\subsubsection{$\SP3\cdot\U1$}
We assert that $\SP3\cdot\U1$ and $\SP3$ act on $M$ with the same orbits as
$\SP3\cdot\SP1$; it is enough to prove the second assertion. Indeed, consider
the fixed point set $\Q P^2$ of the $\SP1$-factor and fix a
basepoint~$p\in\Q P^2$. Of course $\SP3$ acts transitively on $\Q P^2$ with
isotropy group $\SP1\cdot\SP2$, and the description of the slice
representation given in subsection~\ref{sp3sp1} shows that $\SP1\cdot\SP2$
acts with cohomogeneity one on $\nu_p(\Q P^2)$, proving the assertion.
The other proper closed subgroups of $\SP3\cdot\U1$
have already been considered above as
subgroups of the other maximal connected subgroups of $\SP3\cdot \SP1$.

\subsection{$\SU3\cdot\SU3$ and its subgroups}\label{su3su3}

One $\SU3$-factor, say the second, is contained in $\G$
and its fixed point set in $M$ is a totally
geodesic $\C P^2$; the other $\SU3$-factor acts
transitively on this $\C P^2$.
We fix a basepoint $p\in\C P^2$. Put $G=\SU3\cdot\SU3$.
The isotropy group $G_p$
is $\mathsf{S}(\U1\times\U2)\cdot\SU3\cong\U2\times\SU3$, and its slice
representation is $\C^2\otimes_{\mathbb C}\C^3$. From here we deduce
that the principal isotropy group of the $G$-action on $M$
is the maximal torus ${\sf T}^2$ of the diagonal
$\SU3$-subgroup of $G$.
Its fixed point set in the isotropy representation of a principal orbit is the Killing orthogonal complement
of the Lie algebra of ${\sf T}^2$ in the Lie algebra of the
maximal torus of $G$. We enlarge $G$ by adjoining $w\in\Spin9$
where $\psi=\mathrm{Ad}_w$ is a Weyl involution of~$\F$
relative to a maximal torus contained in $\SU3\cdot\SU3\cap\Spin9=
\mathsf{S}(\U1\times\U2)\cdot\SU3$.
To see that the enlarged group has the same
orbits, note first that $\psi$ fixes $p$ so it
preserves the singular $G$-orbit $\C P^2$.
Moreover it induces
an isometry of the orbit space of the slice representation
at~$p$, which must be trivial.
Indeed that orbit space is isometric to the cone over
the interval of length~$\pi/4$, where the endpoints parametrize
orbits of dimensions~$7$ and~$8$~\cite[p.~436]{HPT}.
Since $\exp_p(\nu_p(\C P^2))$
meets every $\SU3\cdot\SU3$-orbit, this shows that $\psi$
preserves the $G$-orbits in $M$.
Now the enlarged group has the same orbits and its principal
isotropy group contains $w$. Since $\psi$ acts as minus
identity on the maximal torus of $G$, this proves that
the enlarged groups acts asystatically.

Now assume a closed subgroup~$H$ of~$G$ acts infinitesimally polar on~$M$.
We will apply a similar argument as in \cite[p.~454]{K3}.
It follows for all $g \in G$ that also $gHg^{-1}$ acts infinitesimally polar on~$M$.
The slice representation of the $G$-action at~$p$ is irreducible
of cohomogeneity two and it follows from Lemma~\ref{polar-min}
and~\cite{D,EH} that $G_p \cap gHg^{-1}$ is either finite or contains
$\SU2 \times \SU3$.
In the former case, it follows that $\dim H \le 4$,
in the latter case $\dim H \ge 11$.
Since these two conditions are mutually exclusive we have that
$G_p \cap gHg^{-1}$ either is finite for all $g \in G$ or contains
$\SU2 \times \SU3$ for all $g \in G$.
If $G_p \cap gHg^{-1}$ is finite for all $g \in G$, it follows that $H$ is finite,
since $G_p$ contains a maximal torus of~$G$.
Otherwise, it follows that $H$ contains $g(\SU2 \times \SU3)g^{-1}$ for all $g \in G$;
hence $H$ contains the normal subgroup generated by the union of $g\, \SU2\, g^{-1} \times \SU3$, $g \in G$. Since $\SU3$ is a simple Lie group, it follows that $H=G$.

\subsection{$\G\cdot\liegr A_1^8$ and its subgroups}\label{g2so3}

We will prove that this group acts asystatically.
In order to describe its action on the Cayley projective plane, we
will use the models of projective spaces
over the normed division algebras $\K$ ($\K=\R$, $\C$, $\Q$, $\Ca$)
given by idempotent Hermitian
matrices of trace $1$~\cite{CR}. Let $\mathcal J$
be the Jordan algebra of Hermitian $3 \times 3$-matrices with entries
in $\Ca$, where the multiplication is defined by
$x \circ y = \frac12(xy+yx)$.
Let $\mathrm{Herm}_\varepsilon(3,\K)$ be the subspace of $\mathcal J$
consisting of Hermitian matrices with entries in $\K$ and
trace~$\varepsilon \in \R$. Then $M=\Ca P^2$ is embedded in
$\mathcal J$ as the smooth real algebraic subvariety
$V=\{ x \in \mathrm{Herm}_1(3,\Ca) \mid x^2=x \}$ of $\mathcal J$.
It is well known that the automorphism group of the algebra $\mathcal J$
is the compact Lie group $\F$. Furthermore, the action of this group leaves $V$ invariant and acts on it as the isometry group of the Riemannian symmetric space $\Ca P^2$. We also identify $\Ca$ with $\Q\times\Q$ via the Cayley-Dickson
process, so that the multiplication in~$\Ca$ is given by
\begin{equation}\label{Def0}
(a,b)(c,d)=(ac-\bar db,da+b \bar c),
\end{equation}
and recall that $\G$ is the automorphism group of~$\Ca$.
Using~(\ref{Def0}), one sees that the maps
\[
\alpha_{p,q} \colon \Ca \to \Ca, \quad (x,y) \mapsto (pxp^{-1},qyp
^{-1}),
\]
where $p$, $q\in\SP1$, comprise the maximal subgroup $\SO4$ of $\G$.

Now the action
of $G:=\SO3\times\G$ on $V$ is simple to describe:
\[ (A,\alpha)\cdot x = A(\alpha(x_{ij}))A^t \]
where $A\in\SO3$, $\alpha\in\G$, $x=(x_{ij})\in V$.
Since both factors are centerless groups, this indeed
defines an effective action of $\SO3\times\G$ on $V$.

Since the fixed point set of $\G$ in $\Ca$ is $\R$, we see that
the fixed point set~$V^{{\sf G}_2}=V\cap\mathrm{Herm}_1(3,\R)=\R P^2$
and that $\SO3$ acts transitively on that set.
For any $x\in V$, we have
\begin{equation}\label{1}
 T_xV=\{y\in\mathrm{Herm}_0(3,\Ca)\;|\;xy+yx=y\}.
\end{equation}
We fix a basepoint
\[ p=\left(\begin{array}{ccc}1&0&0\\0&0&0\\0&0&0\end{array}\right)\in V. \]
It is immediate that $G_p=\OG2\times\G$, where $\OG2$ is embedded into $\SO3$
via
\[ B\mapsto\left(\begin{array}{cc}\det B&0\\0&B\end{array}\right) \]
and a simple calculation using~(\ref{1}) shows that
the normal and tangent spaces to $V^{{\sf G}_2}=\R P^2$ in $V=\Ca P^2$ are:
\[  \nu_p(\R P^2)=\left\{\,\left(\begin{array}{ccc}0&a&b\\\bar a&0&0\\\bar b&0&0\end{array}\right)\;\Big|\;a,\; b\in\Im\Ca\,\right\} \]
and
\[ T_p(\R P^2)=\left\{\,\left(\begin{array}{ccc}0&a&b\\\bar a &0&0\\\bar b&0&0\end{array}\right)\;\Big|\;a,\; b\in\R\,\right\}. \]
Therefore the slice representation $(G_p,\nu_p(\R P^2))$ is
$(\OG2\times\G,\R^2\otimes_{\R}\R^7)$, where $\Im\Ca\cong\R^7$.
This representation is orbit equivalent to
$(\SO2\times\SO7,\R^2\otimes_{\R}\R^7)$, polar, and
of cohomogeneity~$2$~\cite{EH}. The subspace of $\nu_p(\R P^2)$ spanned by
\[ \left(\begin{array}{ccc}0&i&0\\-i&0&0\\0&0&0\end{array}\right)\quad\mbox{and}\quad
\left(\begin{array}{ccc}0&0&j\\0&0&0\\-j&0&0\end{array}\right) \]
is a section, from which we find a principal isotropy group
\[ H\cong(\Z_2)^2\times\SP1; \]
here $\SP1$ is the normal subgroup of
$\SO4\subset\G$ that acts trivially on $\Q\cong\Q\times\{0\}\subset
\Ca$, namely, generated by $\alpha_{1,q}$ for $q\in\SP1$,
and $(\Z_2)^2$ is embedded diagonally in $\SO3\times\G$,
with generators
\[ h_1=\left(\left(\begin{array}{ccc}-1&0&0\\0&-1&0\\0&0&1\end{array}\right),
\alpha_{i,i}\right) \]
and
\[ h_2=\left(\left(\begin{array}{ccc}-1&0&0\\0&1&0\\0&0&-1\end{array}\right),
\alpha_{j,j}\right). \]
The fixed point set of $H$ in $V$ can be obtained as follows.
First note that $V^{H^0}=V\cap\mathrm{Herm}_1(3,\Q)=\Q P^2$.
A direct calculation now shows that $V^H=(V^{H^0})^H$
consists of matrices
\[ \left(\begin{array}{ccc}a^2&-abi&-acj\\abi&b^2&-bck\\acj&bck&c^2\end{array}\right) \]
where $a$, $b$, $c\in\R$ and $a^2+b^2+c^2=1$. This is a $2$-dimensional
submanifold and clearly a copy of $\R P^2$. This proves that the
$G$-action on $M$ is asystatic; in particular, it is polar.

We proceed to compute the generalized Weyl group and the orbit space.
For computational ease, we apply the following result, which is also
of independent interest. It is a modification of the Luna-Richardson-Straume
reduction, cf.~\cite[Ex.~2.2.1]{Gor4} or~\cite[\S2.6]{G-L3},
where, instead of a principal isotropy group,
only its identity component is used.
See~\cite[Sect.~2]{GOT} for the definition of a generalized section.

\begin{lem}\label{gen-sec}
Assume a Lie group $G$ acts properly and isometrically on
a connected complete Riemannian manifold~$M$. Consider the identity component
$H^0$ of a fixed principal isotropy group~$H$. There is a component $\Sigma$
of the fixed point set $M^{H^0}$ which contains a point with isotropy
group~$H$.
Then $\Sigma$ is a generalized section for the $G$-action
on~$M$. In particular, there is an isometry of orbit spaces
$M/G=\Sigma/\mathcal W$, where the generalized Weyl group is defined by
$\mathcal W = N_G(\Sigma) / Z_G(\Sigma)$.
Moreover, $N_G(\Sigma)$ is an open and closed subgroup of
$N_G(H^0)$ and $Z_G(\Sigma)$ is an open and closed subgroup of~$H$.
\end{lem}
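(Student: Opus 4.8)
The plan is to follow the classical Luna--Richardson--Straume reduction scheme, with the single modification that we work with the identity component $H^0$ of a principal isotropy group rather than with $H$ itself. First I would recall the basic facts: since $H$ is a principal isotropy group, there is an open dense subset $M_{\mathrm{pr}}$ of $M$ consisting of points whose isotropy group is conjugate to $H$; in particular the fixed point set $M^{H^0}$ is non-empty, and because $H^0\subset H$ it contains a point $x_0$ with $G_{x_0}=H$. Let $\Sigma$ be the connected component of $M^{H^0}$ through $x_0$. Since $M^{H^0}$ is a totally geodesic (in fact, a disjoint union of closed totally geodesic submanifolds) fixed point set of the compact group $H^0$ acting isometrically, $\Sigma$ is a complete connected totally geodesic submanifold of $M$.

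The main point is to show that $\Sigma$ is a \emph{generalized section}, i.e.\ that $\Sigma$ meets every $G$-orbit and that at each point $x\in\Sigma\cap M_{\mathrm{pr}}$ the intersection $T_x\Sigma\cap T_x(Gx)$ equals the tangent space to an $N_G(\Sigma)$-orbit, which is the defining property in~\cite{GOT}. For the first assertion I would argue exactly as in the classical case: given $y\in M$, the orbit $Gy$ meets $M_{\mathrm{pr}}$, so after translating we may assume $G_y\supset$ a conjugate of $H$, hence a conjugate of $H^0$; conjugating further we may assume $H^0\subset G_y$, i.e.\ $y\in M^{H^0}$, and since the connected components of $M^{H^0}$ are permuted transitively by $N_G(H^0)$ on the open dense part — because any two points of $M_{\mathrm{pr}}$ with isotropy \emph{equal} to a fixed conjugate lie in the same $N_G(H)$-orbit, and $N_G(H)\subset N_G(H^0)$ — we can move $y$ into $\Sigma$. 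For the slice/tangency condition, at a principal point $x\in\Sigma$ one decomposes $T_xM$ into $H^0$-isotypical pieces; the trivial isotypical component is $T_x\Sigma$, and standard slice-representation arguments show the $G$-orbit direction inside $T_x\Sigma$ is exactly the image of $\mathfrak{n}_{\mathfrak g}(H^0)$, i.e.\ the $N_G(\Sigma)$-orbit direction. This gives the isometry of orbit spaces $M/G\cong\Sigma/\mathcal W$ with $\mathcal W=N_G(\Sigma)/Z_G(\Sigma)$ acting properly and isometrically on $\Sigma$.

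Finally, for the last sentence of the statement I would note that $N_G(\Sigma)$ is by definition the stabilizer in $N_G(H^0)$ of the component $\Sigma$ under the (continuous) permutation action of $N_G(H^0)$ on the set of components of $M^{H^0}$; since this set is discrete and the action is continuous, the stabilizer of a point is open, and being a subgroup it is also closed in $N_G(H^0)$. Likewise $Z_G(\Sigma)=\{g\in G\mid g|_\Sigma=\mathrm{id}\}$ contains $H^0$ and is contained in $H$ once one observes that a point of $\Sigma\cap M_{\mathrm{pr}}$ with isotropy exactly $H$ is fixed by $Z_G(\Sigma)$ but its isotropy is $H$; and $Z_G(\Sigma)$ is open and closed in $H$ because $[H:H^0]<\infty$, so any subgroup containing $H^0$ is automatically closed, and it is open since it has finite index.

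The step I expect to be the main obstacle is the verification that $N_G(H^0)$ acts transitively on the relevant set of components of $M^{H^0}$ — or, put differently, that $\Sigma$ really does meet every orbit and not just the orbits through one $N_G(H^0)$-orbit of components. The subtlety, compared with the usual Luna--Richardson setup, is that $M^{H^0}$ is generally strictly larger than $M^{H}$ and may have extra components that do not contain any point with isotropy exactly $H$; one must check that those extra components contribute no new $G$-orbits, which is why the statement is carefully phrased as ``there is a component $\Sigma$ which contains a point with isotropy group $H$'' and uses $N_G(H)\subset N_G(H^0)$ together with the density of $M_{\mathrm{pr}}$ to conclude. Once this transitivity-on-the-right-set-of-components point is pinned down, the remainder is the standard slice-representation bookkeeping.
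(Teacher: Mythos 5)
Your overall scheme is the right one, but the central step --- that $\Sigma$ meets \emph{every} $G$-orbit --- is not actually established, and you flag it yourself as ``the main obstacle.'' Worse, the route you sketch for it rests on two incorrect claims. First, an arbitrary orbit $Gy$ need not meet $M_{\mathrm{pr}}$ (singular and exceptional orbits contain no principal points); what is true, by the slice theorem at $y$, is that $G_y$ contains a conjugate of $H$, so that $Gy$ meets $M^{H}\subset M^{H^0}$. Second, it is false that any two points of $M_{\mathrm{pr}}$ with isotropy group exactly $H$ lie in the same $N_G(H)$-orbit: the $N_G(H)$-orbits in $\{p\in M_{\mathrm{pr}}\mid G_p=H\}$ have positive codimension equal to the cohomogeneity, so no such transitivity is available, and the transitivity of $N_G(H^0)$ on the relevant components of $M^{H^0}$ does not follow from it. The paper closes this gap with a purely metric argument that bypasses components entirely: for a regular $p\in\Sigma$ one has $H^0=(G_p)^0$, the slice representation at $p$ is trivial, hence $\nu_p(G\cdot p)\subset (T_pM)^{H^0}=T_p\Sigma$; since $\Sigma$ is complete and totally geodesic, a minimizing geodesic from any orbit $G\cdot y$ to the point $p$ is orthogonal to $G\cdot p$ at $p$ and therefore lies in $\Sigma$, so $\Sigma$ meets $G\cdot y$. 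This is the missing idea, and without it (or a correct substitute) the proof does not go through.

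A second, smaller issue: the defining property of a generalized section in \cite{GOT} that still has to be verified is the equivariance condition --- if $p$ and $q=g\cdot p$ are $G$-regular points of $\Sigma$, then $g\cdot\Sigma=\Sigma$ --- rather than the tangency condition you substitute for it. The paper checks this by noting that $(G_p)^0=H^0=(G_q)^0$ and $G_q=gG_pg^{-1}$ force $g\in N_G(H^0)$, so $g$ preserves $M^{H^0}$ and carries the component through $p$ to the component through $q$, both equal to $\Sigma$. This same observation is what justifies your description of $N_G(\Sigma)$ as the stabilizer of $\Sigma$ inside $N_G(H^0)$ (which is otherwise circular); granting it, your openness/closedness arguments for $N_G(\Sigma)$ in $N_G(H^0)$ and for $Z_G(\Sigma)$ in $H$ are correct and essentially identical to the paper's.
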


\begin{proof}
One can always pick a component $\Sigma$ of $M^{H^0}$
as in the statement.
It is clear that $\Sigma$ is a connected closed totally geodesic submanifold
of $M$ such that its tangent space $T_p\Sigma$ contains
the normal space $\nu_p(G \cdot p)$ for every $G$-regular point $p\in\Sigma$,
and which thus meets all $G$-orbits.
To see it is a generalized section, it remains only to
prove that if $p$, $q=g \cdot p\in\Sigma$ are $G$-regular points for
some $g\in G$ then $g \cdot \Sigma=\Sigma$.

Assume that $p$, $q \in \Sigma$ are regular points such that $q = g \cdot p$.
Then we have $(G_p)^0=H^0=(G_q)^0$ and $G_q=gG_pg^{-1}$. This implies
$H^0 = (G_q)^0=(g G_p g^{-1})^0 = g H^0 g^{-1}$, showing that $g\in N_G(H^0)$.
Hence $N_G(\Sigma)$ is a closed subgroup of $N_G(H^0)$.
Since $g$ normalizes $H^0$, we have $g \cdot M^{H^0}=M^{H^0}$ and
it follows that $g$ maps the connected component of $M^{H^0}$
containing~$p$ onto that component containing~$q$ (the same),
that is, $g \cdot \Sigma=\Sigma$.
This completes the proof that $\Sigma$ is a generalized section.
The assertion about the orbit space now follows
(compare~\cite[Thm.~2.1.1]{Mag2} and~\cite[Prop.~2.2.1]{Gor4}).
It is also clear that $N_G(\Sigma)$ contains the identity component
of $N_G(H^0)$, and hence $N_G(\Sigma)$ is open in that group.

Finally, it is obvious that $H^0 \subset Z_G(\Sigma)$.
Let $g \in Z_G(\Sigma)$ and assume $p \in \Sigma$ has $G_p=H$.
Then $g \in H$.
\end{proof}

It is not hard to see
that the normalizer $N_{{\sf G}_2}(H^0)=\SO4$, so
$N_G(H^0)=\SO3\times\SO4$.
Since this normalizer is connected, it follows from
Lemma~\ref{gen-sec} that $N_G(\Sigma)=N_G(H^0)$.
It also follows from the lemma that $Z_G(\Sigma) = H^0$, since  the elements $h_1$ and $h_2$ defined above act nontrivially on $M^{H^0}$.
Now $\mathcal W = N_G(H^0)/H^0=\SO3\times\SO3$ acts on $V^{H^0}=\Q P^2$, and the map
\[ (x\in\Q^3, ||x||=1) \mapsto xx^*\in\mathrm{Herm}_1(3,\Q) \]
is $\SP3$-equivariant and induces the standard embedding of $\Q P^2$
into $\mathrm{Herm}_1(3,\Q)$, so we can see the $\mathcal W$-action in homogeneous
coordinates as
$(A,\pm q)\in\SO3\times\SP1/\Z_2=\SO3\times\SO3$ acting on
$\left[\begin{array}{c}x_1\\x_2\\x_3\end{array}\right]\in\Q P^2$
by
$A\left[\begin{array}{c}qx_1q^{-1}\\qx_2q^{-1}\\qx_3q^{-1}\end{array}
\right]=A\left[\begin{array}{c}qx_1\\qx_2\\qx_3\end{array}\right]
\in\Q P^2$.
As in section~\ref{classical}, this action canonically
lifts to a representation of $\SO3\times\SP1\times\SP1$ on~$\Q^3$
of cohomogeneity $3$, where the last $\SP1$-factor
acts by right multiplying each coordinate by a unit quaternion.
Finally, this representation is, up to a $\Z_2$-kernel,
equivalent to~$(\SO3\times\SO4,\R^3\otimes_{\R}\R^4)$
and hence polar.
The Weyl group
for the action of $\SO3\times\SO4$ on $S^{11}$
is of type ${\liegr C}_3$ acting on $S^2$, so the
generalized Weyl group
for the action on $\Q P^2$ (and for the action on $M$)
is of type ${\liegr C}_3/\Z_2$ acting on $\R P^2$. Note that this argument
also proves polarity of the $G$-action on $M$.

An analogous argument as in the last paragraph of~\ref{su3su3},
applied to the slice representation of $G_p = \OG2 \times \G$ at~$p$,
shows that any closed subgroup of $G$ acting infinitesimally polar on~$M$
equals~$G$.

\begin{rmk}
It follows rather easily from $V\cap\mathrm{Herm}_1(3,\K)\cong\K P^2$
that the fixed point sets of the subgroups in the chain
$\Spin9\supset \G \supset \SU3 \supset \SP1 \supset \{1\}$, where
$\SP1$ is an index~$1$ subgroup of $\F$, yield a
chain of totally geodesic submanifolds
$\{\mathrm{pt}\}\subset\R P^2\subset \C P^2
\subset \Q P^2 \subset \Ca P^2$.
\end{rmk}

\subsection{Rank one subgroups}\label{rk1}

The action of a subgroup $G$
of rank one of $\F$ cannot be infinitesimally polar.
In fact its maximal torus (a circle subgroup)
is conjugate to a subgroup of $\Spin9$ so it
has a fixed point, say $q$. The normal space to the $G$-orbit
through $q$ in $M$ has dimension at least $14$. The components of
the fixed point set in $M$
of any non-trivial subgroup of $\F$ are totally geodesic submanifolds
of dimension at most $8$. It follows that the isotropy group at $q$
acts without fixed directions
on a subspace of the normal space of dimension at least $6$,
but there are no polar representations of $\U1$ or $\SU2$
without fixed directions in that dimension.

\section{Addendum}

The following construction yields closed subgroups of $\F$
that act on $\Ca P^2$ with a totally geodesic singular orbit.
The list includes all polar actions on $\Ca P^2$.
Let~$P$ be a connected closed totally geodesic submanifold of $\Ca P^2$.
Let $N(P)$ and $Z(P)$ denote the identity components of the subgroups
of $\F$ consisting of elements that preserve $P$, resp., fix $P$ pointwise.
Note that $Z(P)$ is a normal subgroup of $N(P)$.
The possibilities for $P$ are well known~\cite{Wolf3}.
\begin{table}
\begin{minipage}{\textwidth}
\[\begin{array}{|c|c|c|c|c|c|}
\hline
P & Z(P) & N(P) & \textsl{Slice repr} & \textsl{Cohom} & \textsl{Polar?} \\

\hline

\{\mathrm{pt}\} & \Spin9 & \Spin9 & (\Spin9,\R^{16}) & 1 & \textrm{yes} \\ 

 \R P^2 & \G & \G \times \SO3 & (\G\times\OG2,\Im(\Ca) \otimes_\R \R^2) & 2 & \textrm{yes} \\ 

 \C P^2 & \SU3 & \SU3 \cdot \SU3 & (\SU3\times\U2,\C^3 \otimes_\C \C^2) & 2 & \textrm{yes} \\ 

 \H P^2 & \SP1 & \SP1 \cdot \SP3 & (\SP1\times\SP2,\H^1 \otimes_\H \H^2) & 1 & \textrm{yes} \\ 

 S^1 & \Spin7 & \Spin7 \cdot \SO2 & (\Spin7,\R^7 \oplus \R^8) & 2 & \textrm{yes} \\ 

 S^2 & \SU4 & \Spin6\cdot \Spin3 & (\U4,\R^6 \oplus \C^4) & 2 & \textrm{yes} \\ 

 S^3 & \SP2 & \Spin5 \cdot \Spin4 & (\SP1\cdot\SP2,\R^5\oplus\Q\otimes_{\mathbb H}\Q^2) &  3 &  \textrm{no} \\ 

 S^4 & \SP1 \cdot \SP1 & \Spin4 \cdot \Spin5 & (\SP1^4,\H \oplus \H \oplus \H) & 3 & \textrm{no} \\ 

 S^5 & \SP1 & \Spin3  \cdot \Spin6 & (\SP1\cdot\SP2,\R^3 \oplus \C^4) & 2 & \textrm{yes} \\ 

 S^6 & \SO2 & \SO2 \cdot \Spin7 & (\U4, \R^2 \oplus \C^4) & 2 & \textrm{yes} \\ 

 S^7 & \{1\} & \Spin8 & (\Spin7,\R\oplus\R^8) & 2 & \textrm{yes} \\ 

 S^8 & \{1\} & \Spin9 & (\Spin8,\R^8) & 1 & \textrm{yes} \\ \hline

\end{array}\]
\begin{center}
\hfill\\
\textsc{Table~4}\hfill\\
\hfill\\
\end{center}
\end{minipage}
\end{table}
In Table~4 we
indicate the (almost effectivized)
slice representation for the action of $N(P)$ on $\Ca P^2$ at a point $p$ in $P$, the cohomogeneity, and whether that action is polar or not.
Note $P=S^4$ is the only one case in which the action
of $N(P)$ on $\Ca P^2$ is not polar, see subsection~\ref{spin54}, but its
slice representation at $p$ is.
In fact, the slice representation is equivalent to the action of~$\SP1^4$ on~$\H \oplus \H \oplus \H$ given by
$(a,b,c,d) \cdot (x,y,z) = (axb^{-1},axc^{-1},bxd^{-1}).$\hfill\\

\par
{\small
\noindent\textit{Complements and corrections.} The authors take this
opportunity to make corrections to some of their previous papers.

In~\cite{B-Go}, the proof that polar actions on the Cayley projective
plane are taut was done case-by-case using a reduction argument,
but the group $\SO3\times\G$ was not considered. It is readily seen that the same method by reduction can be applied to this group.
Moreover a short proof of the main result of that paper, that does not
rely on classification results, is obtained from~\cite[Theorem~3.20]{Wiesen1}.

In~\cite[Sec.~11]{K4}, a classification of polar actions on the
Cayley hyperbolic plane was given under the hypothesis that the group acting
is a reductive algebraic subgroup of the isometry group. However, the results
of the present article show that, in addition to the actions given there,
also the group $\SO{2,1} \cdot \G$ acts polarly on $\Ca H^2$ and with an
orbit which is a totally geodesic~$\R H^2$.
Since there are no totally geodesic
orbits of $\SO3 \times \G$ on~$\Ca P^2$ other than the $\R P^2$ on which
the~$\SO3$-factor acts as the isometry group and which is fixed by
the~$\G$-factor, it follows that the complete list of all connected
reductive algebraic subgroups of the noncompact form of $\F$ which act
polarly on $\Ca H^2$ is given by the the actions in \cite[Thm~11.1]{K4}
together with the action of $\SO{2,1} \cdot \G$.
}

\providecommand{\bysame}{\leavevmode\hbox to3em{\hrulefill}\thinspace}
\providecommand{\MR}{\relax\ifhmode\unskip\space\fi MR }
\providecommand{\MRhref}[2]{%
  \href{http://www.ams.org/mathscinet-getitem?mr=#1}{#2}
}
\providecommand{\href}[2]{#2}


\end{document}